\newtheorem{theorem}{Theorem}[section]
\newtheorem{prop}[theorem]{Proposition}
\newtheorem{lem}[theorem]{Lemma}
\newtheorem{corol}[theorem]{Corollary}
\theoremstyle{definition}
\newtheorem{defi}[theorem]{Definition}
\theoremstyle{definition}
\newtheorem*{theoremmain}{Corollary \ref{theorem:main}}
\newtheorem*{theoremmaintwo}{Theorem \ref{theorem:main2}}
\def\Hom{{\rm{Hom}}}
\def\Ext{{\rm{Ext}}}
\def\add{{\rm{add}\,}}
\def\Gr{{\rm{Gr}}}
\def\End{{\rm{End}}}
\def\dim{{\rm{dim}\,}}
\def\ddim{{\textbf{dim}\,}}
\def\ind{{\rm{ind-}}}
\def\Ob{{\rm{Ob}}}
\def\Cl{{\rm{Cl}}}
\def\C{{\mathbb{C}}}
\def\Z{{\mathbb{Z}}}
\def\N{{\mathbb{N}}}
\def\Q{{\mathbb{Q}}}
\def\<{\left<}
\def\>{\right>}
\def\ens#1{\left\{ #1 \right\}}
\def\fl{{\longrightarrow}\,}
\title{Caldero-Keller approach to the denominators of cluster variables}
\author{\textsc{G. Dupont}}
 \address{Universit\'e de Lyon \\
Universit\'e Lyon 1 \\
Institut Camille Jordan CNRS UMR 5208 \\
43, boulevard du 11 novembre 1918\\
F-69622 Villeurbanne Cedex.}
 \email{dupont@math.univ-lyon1.fr}
\begin{document}
\maketitle

\begin{abstract}
	Buan, Marsh and Reiten proved that if a cluster-tilting object $T$ in a cluster category $\mathcal C$ associated to an acyclic quiver $Q$ satisfies certain conditions with respect to the exchange pairs in $\mathcal C$, then the denominator in its reduced form of every cluster variable in the cluster algebra associated to $Q$ has exponents given by the dimension vector of the corresponding module over the endomorphism algebra of $T$. In this paper, we give an alternative proof of this result using the Caldero-Keller approach to acyclic cluster algebras and the work of Palu on cluster characters.
\end{abstract}

\setcounter{tocdepth}{1}
\tableofcontents

\begin{section}{Introduction}
	
	Cluster algebras were introduced in \cite{cluster1} in order to study problems of total positivity. Then, they were subject to developments in various directions including combinatorics, Lie theory, Teichm\"uller theory and quiver representations.

	By definition, the cluster algebras are commutative algebras generated by a set of variables called \emph{cluster variables} gathered into sets of fixed cardinality called \emph{clusters}. The \emph{Laurent phenomenon} proves that the elements in the cluster algebra are Laurent polynomials in the variables contained in any fixed cluster. The initial data for constructing a cluster algebra is a pair $(Q,\textbf u)$ where $Q$ is a quiver and $\textbf u=(u_i, i \in Q_0)$ is a tuple of indeterminates over $Q$, the associated cluster algebra is denoted by $\mathcal A(Q)$.

	The link with the representation theory of quivers found one of its motivation in the finite type classification of \cite{cluster2}. This asserts that the number of cluster variables in $\mathcal A(Q)$ is finite if and only if $Q$ is a Dynkin quiver and in this case there is a 1-1 correspondence between the \emph{almost positive roots} system of $Q$ (that is the disjoint union of the positive roots and the opposite of simple roots) and the cluster variables in $\mathcal A(Q)$. Considering that the cluster variables are Laurent polynomials, one can define the denominator vector of a cluster variable $P(\textbf u)/\prod_i u_i^{d_i}$ in reduced form as the tuple $\textbf d=(d_i)$. The authors obtained a first interesting description of the denominator vectors for cluster variables when $Q$ is a certain quiver of Dynkin type proving that the denominator vectors for the cluster variables were the almost positive roots of $Q$ (see also \cite{CCS1}). If $Q$ is an acyclic quiver (that is without oriented cycles), the cluster algebra $\mathcal A(Q)$ is called an acyclic cluster algebra.

	Initiated by the work of \cite{MRZ}, the research of a theoretical framework for the study of cluster algebras leaded to a fruitful categorification with the cluster category  introduced in \cite{BMRRT} (see also \cite{CCS1} for the Dynkin type $\mathbb A$). In \cite{BMRT}, a surjective map $\alpha$ was defined from the set of cluster variables of $\mathcal A(Q)$ to the set of indecomposable exceptional objects in the cluster category $\mathcal C$. This map satisfies that the denominator of a cluster variable $x$ can be described in terms of dimension vector (in a proper sense to be defined) of the indecomposable exceptional object $\alpha(x)$. In \cite{CC}, a map was defined in the reverse direction, allowing the authors of \cite{CK1,CK2} to prove independently the interpretation of the denominators of cluster variables in terms of composition factors of the corresponding exceptional object in the cluster category. In particular, this generalizes the correspondence between denominator vectors of cluster variables and almost positive roots of $Q$ in the case when $Q$ is a Dynkin quiver.

	It is an interesting question to wonder how the denominator behaves if one expresses the cluster variables in terms of different seeds. It is known since \cite{BMRT,CK2} that each seed can be associated with a cluster-tilting object in $\mathcal C$. A description of the denominators was given in \cite{CCS1,CCS2} when $Q$ is a Dynkin quiver. Recently, Buan, Marsh and Reiten obtained a generalization in \cite{BMR3} when $Q$ is any acyclic quiver under some conditions on the cluster-tilting object corresponding to the seed in which the cluster variables are expressed.

	It is known since \cite{BCKMRT} that the approaches developed independently in \cite{BMRT} and \cite{CC,CK1,CK2} are dual to each other. Namely, the maps defined there are their mutual inverse. These two approaches are very complementary and allow to have a very good understanding of the link between cluster algebras and cluster categories. 

	The recent works of \cite{Palu}, based on conjectures of \cite{CK1}, developed a Caldero-Keller approach to the change of seeds in cluster algebras. By reading \cite{BMR3}, one could realize that the given conditions on the cluster-tilting objects appeared naturally in the context of the Caldero-Keller approach to the change of seeds. Following this idea, we found it interesting to give alternative proofs for some results of \cite{BMR3} using the Caldero-Keller approach and the works of Palu. Except for the last section, our results are independent of those of \cite{BMR3}.

	The paper is organized as follows. In section \ref{section:background} we present the necessary background and the main result of this paper. In section \ref{section:character}, we recall the definition and properties of the generalized Caldero-Chapoton map from \cite{Palu}. In section \ref{section:weakly}, we recall some essential property for the behaviour of denominator vectors of Laurent polynomials, called the \emph{positivity condition} in \cite{BMRT} and \emph{weak positivity} in \cite{CK2}. The last two sections are devoted to the proofs of our results.
	
\end{section}

\begin{section}{Background and main results}\label{section:background}
	
		Let $Q=(Q_0,Q_1)$ be an acyclic quiver with $Q_0=\ens{1, \ldots, q}$ is the set of vertices and $Q_1$ is the set of arrows. We will always assume that $Q$ is connected (ie the underlying unoriented graph is connected). In all the paper $k$ denotes the the field $\C$ of complex numbers. We denote by $kQ$ the corresponding path algebra, $kQ$-mod the category of finite dimensional $kQ$-modules and $D^b(kQ)$ the bounded derived category of finite dimensional $kQ$-modules. $\tau$ denotes the Auslander-Reiten translation on $D^b(kQ)$, $S$ the shift functor and $F=\tau^{-1}S$. Let $\mathcal C=D^b(kQ)/F$ be the cluster category associated to $Q$, that is the orbit category of $F$ on $D^b(kQ)$. 
		
		We denote by $\Ob(\mathcal C)$ the set of objects in $\mathcal C$ and by $\ind \mathcal C$ the set of indecomposable objects in $\mathcal C$. An object will be called \emph{exceptional} if it has no self-extension, \emph{basic} if all its distinct indecomposable direct summands are non isomorphic. An object $T$ will be called a \emph{cluster-tilting object} if it is exceptional with $q$ non-isomorphic indecomposable direct summands. We set 
		$$\mathcal T=\ens{M \in \ind \mathcal C \ : \ \Ext^1_{\mathcal C}(M,M)=0}.$$

	 	Fix $\textbf u=(u_1, \ldots, u_q)$ a $q$-tuple of indeterminates over $\Q$. We will denote by $\mathcal A(Q)$ the (coefficient free) cluster algebra over with initial seed $(\textbf u, Q)$. We denote by $\Cl(Q)$ the set of cluster variables in $\mathcal A(Q)$.

	\begin{defi}
		Write $F=P(\textbf u)/\prod_{i=1}^qu_i^{d_i}$ a Laurent polynomial in the $u_i$. Assume that $F$ is written in its irreducible form, that is such that $P$ is not divisible by any $u_i$ and $d_i \in \Z$ for every $i$. The the $q$-tuple $(d_1, \ldots, d_q)$ is called the \emph{denominator vector} of $F$ and is denoted by $\delta(F)$.
	\end{defi}

		In \cite{BMRT}, the authors introduced a surjective map $\alpha: \Cl(Q) \fl \mathcal T$ from the set of cluster variables to the set of indecomposable exceptional objects in the cluster category $\mathcal C$. It is defined by 
		$$\alpha: \left\{\begin{array}{rcl}
			\Cl(Q) & \fl & \mathcal T \\
			x & \mapsto & \alpha(x)
		\end{array}\right.$$
		where $\alpha(x)$ is the unique indecomposable exceptional object with dimension vector $\delta(x)$ if $x \neq u_i$ for all $i \in Q_0$ and $\alpha(x)=SP_i$ if $x=u_i$ for some $i \in Q_0$.

	 	Another point of view, which is dual to this one, consists of realizing the cluster algebra from the cluster category. This approach was developed in \cite{CC,CK1,CK2} where the authors defined and studied a map $X_?:\Ob(\mathcal C) \fl \Z[u_i^{\pm1}, \ldots, u_q^{\pm 1}]$ called the \emph{Caldero-Chapoton map}. 
	 	
	 	In \cite{CK2}, the authors proved that the Caldero-Chapoton map induces a bijection between $\mathcal T$ and $\Cl(Q)$ and it turned out that the induced map on $\mathcal T$ is the bijection inverse to $\alpha$ (see \cite{BCKMRT}), namely
		$$\alpha(X_M)=M \textrm{ and } X_{\alpha(x)}=x$$
		for any $M \in \mathcal T$ and $x \in \Cl(Q)$.
	
		From now on, we fix $T=\bigoplus_{i=1}^q T_i$ a  cluster-tilting object in $\mathcal C$. We denote by $Q_T$ the quiver of the cluster-tilted algebra $B=\End_{\mathcal C}(T)$. It is known (see \cite{BMRT}) that there is a seed $(\textbf x, Q_T)$ mutation-equivalent to $(\textbf u, Q)$ in $\mathcal A(Q)$ where $\textbf x=(x_1, \ldots, x_q)$ is a $q$-tuple of indeterminates over $\Q$. We denote by $\mathcal A(Q_T)$ the coefficient-free cluster algebra with initial seed $(\textbf x, Q_T)$. 
		
		As each $x_i$ is a Laurent polynomial in the $u_i$, we can set
		$\Phi_T: \mathcal A(Q_T) \fl \mathcal A(Q)$ the canonical algebra homomorphism sending $x_i$ to its expansion in $\Z[u_1^{\pm 1}, \ldots, u_q^{\pm 1}]$. Then, $\Phi_T$ is an algebra isomorphism from $\mathcal A(Q_T)$ to $\mathcal A(Q)$ inducing a bijection from $\Cl(Q_T)$ to $\Cl(Q)$.

		If $F$ is a Laurent polynomial in the $x_i$, $i \in Q_0$, we denote by $\delta_T$ the denominator vector of $F$ expressed in $\textbf x$.
	
		We can draw the following picture where the top maps are inverse bijections:
	 	$$\xymatrix{ 
			\mathcal T \ar@<+2pt>[r]^{X_?} & \ar@<+2pt>[l]^{\alpha} \Cl(Q) \ar[r]^{\delta} & \Z^{Q_0}\\
				& \Cl(Q_T)\ar[u]^{\Phi_T} \ar[r]_{\delta_T} & \Z^{Q_0}
		}$$

		It is proved in \cite{CK2} that for every object $M \in \mathcal T$ non-isomorphic to any $SP_i$, the denominator vector of $X_M$ is $$\delta(X_M)=\ddim M=(\dim \Hom_{\mathcal C}(P_1, M), \ldots, \dim \Hom_{\mathcal C}(P_q, M)).$$
		Recently, in \cite{BMR3}, the authors proved that under certain conditions on $T$, namely the so-called \emph{exchange compatibility} (see section \ref{section:compatibility} for a definition), one could obtain a generalization of the above result, also generalizing the denominator theorem of \cite{BMRT}. Following the authors, we set:
		
		\begin{defi}\label{defi:Tdenom}
			A cluster variable $x$ in $\mathcal A(Q_T)$ is said to \emph{have a $T$-denominator} if $$\delta_T(x)=(\dim \Hom_{\mathcal C}(T_1, \alpha(\Phi_T(x))), \ldots, \dim \Hom_{\mathcal C}(T_q, \alpha(\Phi_T(x)))$$
			if $x \not \in \ens{x_1, \ldots, x_q}$ and $\alpha(\Phi_T(x_i))=ST_i$ for every $i \in Q_0$.
		\end{defi}
		
		The following was proved in \cite{BMR3} :
		\begin{theorem}[\cite{BMR3}]\label{theorem:mainBMR3}
			Let $Q$ be a finite quiver with no oriented cycles, let $\mathcal C$ be the cluster category associated to $kQ$ and let $T=\bigoplus_{i=1}^qT_i$ be a cluster-tilting object in $\mathcal C$ such that each $T_i$ is exchange compatible, then every cluster variable has a $T$-denominator.
		\end{theorem}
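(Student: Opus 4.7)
The plan is to transport the problem from the initial seed $(\textbf u, Q)$ to the seed $(\textbf x, Q_T)$ via Palu's generalized Caldero-Chapoton map $X_?^T\colon \Ob(\mathcal C) \fl \Z[x_1^{\pm 1}, \ldots, x_q^{\pm 1}]$ attached to the cluster-tilting object $T$. The key input from \cite{Palu} is that $X_?^T$ realizes the change-of-seed isomorphism $\Phi_T$ at the categorical level: for every $M\in \mathcal T$ one has $\Phi_T(X_M^T)=X_M$, and $X_?^T$ restricts to a bijection $\mathcal T\to \Cl(Q_T)$ inverse to $\alpha\circ\Phi_T$. Writing an arbitrary non-initial cluster variable $x\in \Cl(Q_T)\setminus \ens{x_1,\ldots,x_q}$ as $x=X_M^T$ for $M=\alpha(\Phi_T(x))\in \mathcal T$ therefore reduces the computation of $\delta_T(x)$ to reading off the denominator vector of an \emph{explicit} Laurent polynomial in $\textbf x$. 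The same formalism assigns $x_i=X_{ST_i}^T$, whence $\Phi_T(x_i)=X_{ST_i}$ and $\alpha(\Phi_T(x_i))=ST_i$; this settles the initial cluster case of Definition \ref{defi:Tdenom}.

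Palu's explicit formula for $X_M^T$ exhibits the \emph{a priori} common denominator $\prod_i x_i^{\dim \Hom_{\mathcal C}(T_i,M)}$: every monomial in the finite sum defining $X_M^T$ has $x_i$-exponent bounded below by $-\dim \Hom_{\mathcal C}(T_i,M)$, with equality attained by the term indexed by the zero subobject of the $\End_{\mathcal C}(T)^{op}$-module $\Hom_{\mathcal C}(T,M)$. This immediately gives the componentwise upper bound $\delta_T(x)_i \leq \dim \Hom_{\mathcal C}(T_i,M)$.

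To upgrade this inequality to the desired equality I would invoke the weak positivity principle recalled in Section \ref{section:weakly} following \cite{BMRT, CK2}: the reduced denominator vector of a positive Laurent polynomial coincides with that of any presentation whose numerator does not vanish identically upon specializing a single variable to zero. The role of the exchange compatibility of each $T_i$ is precisely to produce this non-vanishing at $x_i$. I expect it to translate into the assertion that the distinguished term of $X_M^T$ carrying the minimal $x_i$-exponent retains a non-zero coefficient after specialization, equivalently that the corresponding submodule Grassmannian of $\Hom_{\mathcal C}(T,M)$ has non-vanishing Euler characteristic. This categorical-to-combinatorial translation of exchange compatibility into a concrete non-cancellation inside Palu's formula is, in my view, the main obstacle of the proof, and the step where the whole argument genuinely uses the hypothesis of Theorem \ref{theorem:mainBMR3}.
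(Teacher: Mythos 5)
Your setup is fine and matches the paper's reduction: via $X^T_?$ and the commutativity with $\Phi_T$, having a $T$-denominator for every cluster variable is equivalent to the identity $\delta_T(X^T_M)=\ddim\Hom_{\mathcal C}(T,M)$ (resp.\ $-e_i$ for $M\simeq ST_i$) for every indecomposable exceptional $M$. But from that point on your strategy diverges from the paper's and, more importantly, leaves the decisive step unproven. You propose to read the denominator directly off Palu's explicit formula, claiming that every monomial has $x_i$-exponent at least $-\dim\Hom_{\mathcal C}(T_i,M)$ with equality at the zero-submodule term. Even this ``easy'' bound is not justified as stated: the exponent attached to $\mathbf e=0$ is $-\<S_i,FM\>=-\dim\Hom_B(S_i,FM)+\dim\Ext^1_B(S_i,FM)$, which is not visibly $-\dim\Hom_{\mathcal C}(T_i,M)$ (recall that $\<-,-\>$ does not even descend to $K_0(B)$ for a cluster-tilted algebra, only its antisymmetrization does). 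And the step you yourself flag as ``the main obstacle'' --- translating exchange compatibility into a non-cancellation statement for the minimal-exponent term --- is precisely the content of the theorem; announcing that you expect such a translation to exist is not a proof. As written, the hypothesis of exchange compatibility is never actually used.

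The paper's argument avoids the explicit formula entirely. It proceeds by induction along the connected tilting graph, starting from $ST=\bigoplus_i ST_i$ where the claim is immediate. For an edge given by an exchange pair $(U,U^*)$ with middle terms $B=E_{U^*,U}$, $B'=E_{U,U^*}$, Palu's multiplication theorem gives $X^T_UX^T_{U^*}=X^T_B+X^T_{B'}$, and weak positivity of the $X^T_N$ for exceptional $N$ (itself proved by the same induction) turns this into $\delta_T(X^T_U)+\delta_T(X^T_{U^*})=\max(\delta_T(X^T_B),\delta_T(X^T_{B'}))$. Exchange compatibility of $T_i$ is exactly the statement that $\dim\Hom_{\mathcal C}(T_i,-)$ satisfies the same recursion, $\dim\Hom_{\mathcal C}(T_i,U)+\dim\Hom_{\mathcal C}(T_i,U^*)=\max(\dim\Hom_{\mathcal C}(T_i,B),\dim\Hom_{\mathcal C}(T_i,B'))$, so the two quantities propagate identically across the tilting graph; a separate case analysis handles $U\simeq ST_i$, $U^*\simeq ST_i$, and the case where $ST_i$ is a summand of $\overline U$ (using that $E_{U,U^*}\oplus E_{U^*,U}$ is basic). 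If you want to salvage your approach you would need to either supply the non-cancellation argument for Palu's formula from scratch, or switch to this inductive scheme, where the hypothesis enters in a transparent, purely numerical way.
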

	 
		We propose an alternative proof of this result using the Caldero-Chapoton-Keller approach in a generalized context, relying on the works of Palu in \cite{Palu}. We denote by $X^T_?: \mathcal C \fl \mathcal A(Q_T)$ the so-called cluster character on $\mathcal C$ associated to the cluster-tilting object $T$ as introduced in \cite{Palu} (see section \ref{section:character} for definitions). It is known that $X^T_?$ induces a bijection between $\mathcal T$ and the set $\Cl(Q_T)$ of cluster variables in $\mathcal A(Q_T)$. The situation can be described in the following commutative diagram :
		$$\xymatrix{ 
			\mathcal T \ar[rd]_{X^T_?} & \ar[l]^{\alpha} \Cl(Q)\\
				& \Cl(Q_T)\ar[u]^{\Phi_T} \ar[r]^{\delta_T} & \Z^{Q_0}
		}$$
		
		Our main result is the following :
		\begin{theorem}\label{theorem:main}
			Let $Q$ be a finite quiver with no oriented cycles. Let $\mathcal C$ be the cluster category associated to $kQ$ and let $T=\bigoplus_{i=1}^qT_i$ be a cluster-tilting object in $\mathcal C$ such that each $T_i$ is exchange compatible, then for any indecomposable exceptional object $M$ of $\mathcal C$
			$$\delta_T(X^T_M)=\left\{\begin{array}{rl}
				-e_i & \textrm{ if } M\simeq ST_i\\
				\ddim \Hom_{\mathcal C}(T,M)& \textrm{ otherwise}
			\end{array}\right.$$
			In particular, every cluster variable has a $T$-denominator.
		\end{theorem}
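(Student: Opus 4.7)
The plan is to compute $X^T_M$ as an explicit Laurent polynomial in the $x_i$ using Palu's formula (recalled in section~\ref{section:character}) and then to read off its denominator by a leading-monomial analysis, supported by the weak positivity criterion of section~\ref{section:weakly}. The exchange compatibility of the $T_i$ will be used to prevent pathological cancellations in Palu's sum.

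For the special case $M \simeq ST_i$, I would argue that Palu's formula directly yields $X^T_{ST_i} = x_i$: under Palu's conventions the shifted summands $ST_i$ play the role that $SP_i$ plays in the classical Caldero--Chapoton setting, namely they are sent to the initial cluster variables. Writing $x_i = 1/x_i^{-1}$ in reduced form (as prescribed by the denominator-vector convention of the paper) then gives $\delta_T(X^T_{ST_i}) = -e_i$, as required.

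For the generic case $M \not\simeq ST_i$, I would expand Palu's formula as a finite sum
$$X^T_M = \sum_e \chi\bigl(\Gr_e(\cdot)\bigr)\, \textbf x^{\alpha(e,M)},$$
indexed by dimension vectors $e$ of subrepresentations of the relevant $B$-module, where the monomial exponents $\alpha(e,M) \in \Z^{Q_0}$ depend linearly on $e$ and on $\ddim \Hom_{\mathcal C}(T,M)$ via the antisymmetrized Euler form of $B = \End_{\mathcal C}(T)$. I would then isolate the distinguished extremal term whose monomial has exponent exactly $-\ddim \Hom_{\mathcal C}(T,M)$ and whose Euler-characteristic coefficient is non-zero. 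The weak positivity criterion then certifies that this monomial survives in reduced form and that the denominator vector is exactly $\ddim \Hom_{\mathcal C}(T,M)$.

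The main obstacle will be the non-cancellation step: showing that no other term in Palu's sum contributes a strictly smaller exponent in some coordinate, and that the coefficient of the extremal monomial is non-zero. This is precisely where exchange compatibility enters, by controlling which subrepresentation Grassmannians contribute non-trivially and by preventing exponent vectors from dropping below $-\ddim \Hom_{\mathcal C}(T,M)$ in a single coordinate. Once the two cases are settled, the final claim that every cluster variable has a $T$-denominator follows at once by combining the bijectivity of $X^T_? : \mathcal T \to \Cl(Q_T)$ from \cite{Palu} with the commutative diagram displayed in the excerpt.
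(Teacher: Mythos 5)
The special case $M \simeq ST_i$ is unproblematic (in fact $X^T_{ST_i}=x_i$ is part of the \emph{definition} of the cluster character, so there is nothing to compute there), but for the generic case your text is a plan rather than a proof, and the step you yourself flag as the ``main obstacle'' is the entire content of the theorem. The exponents in Palu's formula are $\langle S_i,\textbf e\rangle_a-\langle S_i,FM\rangle$ for the non-hereditary algebra $B=\End_{\mathcal C}(T)$; identifying the coordinatewise minimum of these exponents, over all $\textbf e$ with $\chi(\Gr_{\textbf e}(FM))\neq 0$, with $-\dim\Hom_{\mathcal C}(T_i,M)$ is exactly the statement to be proved, and you give no mechanism for doing so. Exchange compatibility is a numerical condition on $\dim\Hom_{\mathcal C}(T_i,-)$ evaluated on the four terms of an exchange triangle; it offers no visible handle on which quiver Grassmannians of $FM$ are non-empty or on the antisymmetrized Euler form of $B$. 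Since the conclusion genuinely fails without the exchange-compatibility hypothesis, any correct argument must use it in a concrete, localized way, and ``preventing pathological cancellations'' is not such a use. As it stands the generic case has a genuine gap.

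For comparison, the paper's proof is indirect in precisely the way that lets exchange compatibility act: it is an induction along the connected tilting graph, starting from the cluster-tilting object $ST=\bigoplus_i ST_i$. For an exchange pair $(U,U^*)$ with $B=E_{U^*,U}$ and $B'=E_{U,U^*}$, the multiplication formula $X^T_UX^T_{U^*}=X^T_B+X^T_{B'}$ combined with weak positivity yields $\delta_T(X^T_U)+\delta_T(X^T_{U^*})=\max(\delta_T(X^T_B),\delta_T(X^T_{B'}))$, and compatibility of $T_i$ with $(U,U^*)$ is exactly the matching identity for $\dim\Hom_{\mathcal C}(T_i,-)$, which closes the induction. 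Even there, degenerate cases need care: when $ST_i$ is a summand of $\overline U$ one uses that $B\oplus B'$ is basic so that $ST_i$ occurs in at most one of $B,B'$, and the cases $U\simeq ST_i$ or $U^*\simeq ST_i$ are handled separately. None of this appears in your plan; I would recommend abandoning the direct monomial analysis of Palu's sum and reworking the argument along these inductive lines.
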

		This theorem will be proved  in section \ref{section:compatibility}. Our proof is independent of the works of \cite{BMR3}. We will see in section \ref{section:compatibility} that theorems \ref{theorem:main} and \ref{theorem:mainBMR3} are equivalent.
	
 		We also propose an alternative proof of the condition of exchange compatibility. Namely, using the Caldero-Chapoton-Keller approach, we give an alternative proof of the point (b) of Theorem 1.5 in \cite{BMR3}:
		\begin{theorem}\label{theorem:main2}
			Let $Q$ be a finite quiver with no oriented cycles and $\mathcal C$ be the cluster category associated to $kQ$. Let $T=\bigoplus_{i=1}^q T_i$ be a cluster-tilting object in $\mathcal C$. Let
			$\mathcal A(Q)$ be the cluster algebra associated to $Q$. If every cluster variable of $\mathcal A(Q)$ has a $T$-denominator, then $\End_{\mathcal C}(T_i)\simeq k$ for all $i \in Q_0$.
		\end{theorem}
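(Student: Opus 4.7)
Fix $i \in Q_0$; the goal is $\End_{\mathcal C}(T_i) \simeq k$. The plan is to apply the $T$-denominator hypothesis to the non-initial cluster variable $y_i := X^T_{T_i}$, i.e.~the image of $T_i \in \mathcal T$ under Palu's generalised Caldero-Chapoton character recalled in Section \ref{section:character}, and compare the resulting denominator vector with Palu's explicit formula for $y_i$.

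In the generic case $T_i \not\simeq ST_j$ for every $j \in Q_0$, the variable $y_i$ does not belong to $\ens{x_1, \ldots, x_q}$, so the hypothesis yields
\[
\delta_T(y_i) \;=\; \ddim \Hom_{\mathcal C}(T, T_i) \;=\; \bigl(\dim \Hom_{\mathcal C}(T_j, T_i)\bigr)_{j \in Q_0},
\]
whose $i$-th coordinate equals $\dim \End_{\mathcal C}(T_i)$, which is at least $1$ because of $\id_{T_i}$. The degenerate situation $T_i \simeq ST_j$ (equivalently $T_j \simeq \tau^{-1}T_i$ in $\mathcal C$) reduces to a generic analysis at another index via the auto-equivalence $S$, since then $\End_{\mathcal C}(T_i) \simeq \End_{\mathcal C}(T_j)$ and we may run the argument at $T_j$ instead. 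It therefore suffices to establish the reverse inequality $\delta_T(y_i)_i \leq 1$.

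For this upper bound, Palu's formula writes $y_i$ as an Euler-characteristic-weighted sum of Laurent monomials in $\textbf x$, indexed by submodules of the indecomposable projective right $B$-module $P_i^B := \Hom_{\mathcal C}(T, T_i)$ (with $B := \End_{\mathcal C}(T)$), the exponents being governed by an antisymmetrised bilinear form on the Grothendieck group of $B$-mod. Combined with the weak positivity principle recalled in Section \ref{section:weakly}, which controls how $\delta_T$ behaves under the exchange relations at $T_i$, this yields the bound $\delta_T(y_i)_i \leq 1$; assembling the two inequalities gives $\dim \End_{\mathcal C}(T_i) = 1$, hence $\End_{\mathcal C}(T_i) \simeq k$.

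The main obstacle is precisely the bounding step: one has to identify the Laurent monomial of minimal $i$-exponent contributing to Palu's expansion and rule out cancellations among Grassmannian contributions via Euler-characteristic considerations. The weak positivity input from Section \ref{section:weakly} is the decisive device here, as it replaces this global analysis by a finite combinatorial check involving the exchange triangle $T_i \to B' \to T_i^* \to ST_i$ (with $B' \in \add(T/T_i)$) and the $B$-module $\Hom_{\mathcal C}(T, T_i^*)$, through the projective presentation obtained by applying $\Hom_{\mathcal C}(T, -)$ to that triangle.
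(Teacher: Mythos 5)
Your strategy is genuinely different from the paper's: you try to show that $X^T_{T_i}$ itself fails to have a $T$-denominator when $\dim\End_{\mathcal C}(T_i)>1$, by proving an unconditional upper bound $\delta_T(X^T_{T_i})_i\le 1$ and playing it against the lower bound $\delta_T(X^T_{T_i})_i=\dim\End_{\mathcal C}(T_i)$ supplied by the hypothesis. The lower-bound half is fine (and the ``degenerate case'' you worry about is vacuous: $T_i\simeq ST_j$ would give $\Ext^1_{\mathcal C}(T_i,T_j)=\Hom_{\mathcal C}(T_i,ST_j)=\End_{\mathcal C}(T_i)\neq 0$, contradicting the rigidity of $T$). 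But the upper bound is precisely the hard content of the theorem, and you do not prove it --- you flag it yourself as ``the main obstacle'' and then assert that Palu's formula plus weak positivity ``yields the bound.'' It does not, at least not by the means you describe. Reading $\delta_T(X^T_{T_i})_i$ off Palu's expansion would require both identifying the minimal $x_i$-exponent over all nonempty quiver Grassmannians of $\Hom_{\mathcal C}(T,T_i)$ and excluding cancellation among terms whose Euler characteristics need not be positive for wild quivers. Weak positivity, as used in this paper, is a device for propagating denominator vectors along edges of the tilting graph via the multiplication formula, not for extracting them from a single character. And if you instead run the exchange relation $X^T_{T_i}X^T_{T_i^*}=X^T_B+X^T_{B'}$ for the complement $\overline T$ of $T_i$ in $T$, what you obtain from the hypothesis applied to $T_i^*$, $B$, $B'$ and $T_i$ is merely that $T_i$ is compatible with that one exchange pair --- no contradiction yet.

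The paper supplies exactly the missing ingredient by importing Proposition 2.4 of \cite{BMR3} (Proposition \ref{propBMR3}): if $\End_{\mathcal C}(T_i)\not\simeq k$, there is a specific exchange pair $(T_i,T_i^*)$ with which $T_i$ is not compatible. Applying Corollary \ref{corol:exchangemult} and Lemma \ref{lem:faiblementpositif} to that pair, together with the $T$-denominator hypothesis for $B$, $B'$ and $T_i$, forces $\delta_T(X^T_{T_i^*})\neq\ddim\Hom_{\mathcal C}(T,T_i^*)$, so the witness is $X^T_{T_i^*}$, not $X^T_{T_i}$. To complete your argument you would need either this proposition or an honest computation of $\delta_T(X^T_{T_i})_i$ from Palu's formula; as written there is a genuine gap at the decisive step.
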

		This theorem will be proved in section \ref{section:main2}.
\end{section}

\begin{section}{Cluster characters for cluster categories}\label{section:character}
	In this section $Q=(Q_0,Q_1)$ still denotes an acyclic quiver and $\mathcal C$ is the associated cluster category endowed with the shift functor $S$. We denote by $\mathcal A(Q)$ the cluster algebra with initial seed $(Q,\textbf u)$ and by $\Cl(Q)$ the set of cluster variables in $\mathcal A(Q)$. 
		
	Write $X_?$ the Caldero-Chapoton map on $\mathcal C$ (see \cite{CC}).
	\begin{theorem}[\cite{CK2}]
		Let $Q$ be an acyclic quiver, then $X_?$ induces a bijection from $\mathcal T$ to $\Cl(Q)$.
	\end{theorem}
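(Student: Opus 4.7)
The plan is to establish the bijection in two stages: first, show that $X_?$ maps $\mathcal{T}$ into $\Cl(Q)$ via an induction compatible with mutation, and second, deduce injectivity from a denominator computation.

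The backbone of the first stage is a multiplication formula of Caldero-Chapoton type: for indecomposable objects $M,N \in \mathcal{C}$ with $\dim \Ext^1_{\mathcal{C}}(M,N) = 1$ and the two non-split triangles $N \fl E \fl M \fl SN$ and $M \fl E' \fl N \fl SM$, one wants
\[ X_M \cdot X_N = X_E + X_{E'}. \]
Establishing this formula is the main technical obstacle. The proof proceeds by expanding both sides through the definition of $X_?$ and matching, degree by degree, the Euler characteristics of the submodule Grassmannians $\Gr_e(L)$ via a stratification indexed by sub-extensions relating the two exchange triangles; the hypothesis $\dim \Ext^1_{\mathcal{C}}(M,N)=1$ is essential to force a clean cancellation. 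With this formula in hand and with $E, E'$ in the additive closure of the complement of the mutated summand, one recovers an identity of the exact shape of an exchange relation in the cluster algebra.

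Now proceed by induction along the cluster-tilting graph. Take the initial cluster-tilting object $T_0 = SP_1 \oplus \cdots \oplus SP_q$; a direct unwinding of the definition gives $X_{SP_i} = u_i$, so $X_{T_0}$ coincides with the initial cluster of $\mathcal{A}(Q)$. By the result of Buan-Marsh-Reineke-Reiten-Todorov, any basic cluster-tilting object is reached from $T_0$ by a finite sequence of mutations, each replacing a summand $T_k$ by its unique partner $T_k^*$ via exchange triangles whose middle terms lie in $\add(T/T_k)$. Applying the multiplication formula to these exchange triangles, and verifying that the exponents on the $X$-side match those imposed by the cluster-tilted quiver $Q_T$, one obtains that $X_?$ intertwines mutation of cluster-tilting objects with seed mutation. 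Since every cluster variable of $\mathcal{A}(Q)$ arises by iterated mutation from the initial cluster, this yields both $X_?(\mathcal{T}) \subseteq \Cl(Q)$ and surjectivity of the induced map.

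For injectivity, one invokes the denominator theorem of \cite{CC,CK1}: for $M \in \mathcal{T}$ not of the form $SP_i$, the Laurent expansion of $X_M$ in $\textbf{u}$ has denominator vector equal to $\ddim M = (\dim \Hom_{\mathcal{C}}(P_j,M))_{j \in Q_0}$, i.e.\ the dimension vector of the underlying $kQ$-module. Since indecomposable exceptional modules over the hereditary algebra $kQ$ correspond to positive real Schur roots and are in particular determined up to isomorphism by their dimension vector, $X_?$ is injective on $\mathcal{T} \setminus \{SP_1,\ldots,SP_q\}$. The remaining images $X_{SP_i} = u_i$ are pairwise distinct, and their denominator vectors $-e_i$ cannot coincide with any dimension vector of a genuine module, so they stay disjoint from the rest. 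Combined with surjectivity, this gives the required bijection, the whole argument resting on the multiplication formula as the single nontrivial ingredient.
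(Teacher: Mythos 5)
The paper does not prove this statement---it imports it verbatim from \cite{CK2}---and your outline reproduces exactly the strategy of that source: the exchange multiplication formula plus induction along the connected tilting graph starting from $SP_1\oplus\cdots\oplus SP_q$ for well-definedness and surjectivity, and the denominator theorem $\delta(X_M)=\ddim M$ together with the fact that exceptional $kQ$-modules are determined by their dimension vectors for injectivity. Your argument is correct in structure; the only caveat is that the multiplication formula, which you rightly flag as the single nontrivial ingredient, is itself a substantial theorem whose proof you only gesture at, and the denominator computation is in fact carried out within the same induction (via weak positivity) rather than quoted afterwards.
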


	Fix now a basic tilting object $T$ and denote by $B=\End_{\mathcal C}(T)$ and $Q_T$ the quiver of $B$. We denote by $\mathcal A(Q_T)$ the cluster algebra with initial seed $(Q_T,\textbf x)$ and by $\Phi_T$ the canonical algebra homomorphism 
	$$\Phi: \mathcal A(Q) \fl \mathcal A(Q_T)$$
	sending $x_i$ to its Laurent expansion in the cluster $\textbf u$.
	In particular, $\Phi$ induces a bijection from $\Cl(Q_T)$ to $\Cl(Q)$.

	We denote by $F=\Hom_{\mathcal C}(T,?): \mathcal C \fl B\textrm{-mod}$. For any objects $M,N$ in $B$-mod, we write
	$$\<M,N\>=\dim \Hom_B(M,N) - \dim \Ext^1_B(M,N)$$
	$$\<M,N\>_a=\<M,N\>-\<N,M\>$$
	
	Note that in general, $\<-,-\>$ does not induce a bilinear form on the Grothendieck group $K_0(B)$. Nevertheless, Palu proved in \cite{Palu} that the anti-symmetrized bilinear form $\<-,-\>_a$ induces a form on $K_0(B)$.

	For any object $M$ in $\mathcal C$, following \cite{Palu}, we define
	$$X_M^T=\left\{ \begin{array}{ll}
		x_i & \textrm{ if }M\simeq ST_i\\
		\sum_{\textbf e} \chi(\Gr_{\textbf e}(FM)) \prod_i x_i^{\<S_i, \textbf e\>_a-\<S_i, FM\>} & \textrm{ otherwise}
	\end{array}\right.$$
	where $\textbf e$ runs over the dimension vectors of $B$-modules, $\Gr_{\textbf e}(FM)$ denotes the variety of submodules of $FM$ with dimension vector $\textbf e$ and $\chi$ denotes the Euler-Poincar\'e characteristic. Moreover, this map satisfies 
	$$X^T_{M \oplus N}=X^T_MX^T_N$$
	for any two objects $M,N$ in $\mathcal C$.

	Then, it is known that the following diagram commutes
	$$\xymatrix{
		& \mathcal T \ar[ld]_{X_?} \ar[rd]^{X_?^T} \\
		\mathcal A(Q) & & \ar[ll]^{\Phi_T} \mathcal A(Q_T)
	}$$
	In particular, $X_?^T$ induces a bijection from $\mathcal T$ to $\Cl(Q_T)$.

	For any two objects $U,V$ such that $\Ext^1_{\mathcal C}(U,V) \simeq k$, we denote by $E_{U,V}$ and $E_{V,U}$ the unique objects in $\mathcal C$ such that there exists non-split triangles
	$$U \fl E_{V,U} \fl V \fl SU$$
	$$V \fl E_{U,V} \fl U \fl SV$$

	One of the interesting properties of the cluster character $X^T_?$ is its behaviour under multiplication:

	\begin{theorem}[\cite{Palu}]\label{theorem:multiplication}
		Fix $Q$ an acyclic quiver, $\mathcal C$ its cluster category. Fix $U,V$ two objects in $\mathcal C$ such that $\dim \Ext^1_{\mathcal C}(U,V)=1$, then
		$$X^T_UX^T_{V}=X^T_{E_{U,V}} + X^T_{E_{V,U}}.$$
	\end{theorem}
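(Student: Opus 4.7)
The plan is to transfer the entire computation to $B$-modules via $F=\Hom_{\mathcal C}(T,-)$ and to exploit the two exchange triangles
$$U \fl E_{V,U} \fl V \fl SU, \qquad V \fl E_{U,V} \fl U \fl SV.$$
Applying $F$ to each triangle yields a six-term exact sequence relating $FU$, $FV$, $FE_{U,V}$, $FE_{V,U}$ and the $B$-modules $FSU$, $FSV$. Up to the projective-injective correction terms that are hidden in these extremal pieces, one obtains short exact sequences of the form $0\to FU\to FE_{V,U}\to FV$ and $0\to FV\to FE_{U,V}\to FU$, which is exactly what is needed to decompose Grassmannians.

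First I would expand the left-hand side via the defining formula, writing
$$X^T_U X^T_V=\sum_{\textbf e,\textbf f} \chi\bigl(\Gr_{\textbf e}(FU)\bigr)\chi\bigl(\Gr_{\textbf f}(FV)\bigr)\prod_i x_i^{\<S_i,\textbf e+\textbf f\>_a - \<S_i,FU\oplus FV\>},$$
and then group terms by the total dimension vector $\textbf g=\textbf e+\textbf f$. The central geometric step is to stratify $\Gr_{\textbf g}(FE_{U,V})\sqcup\Gr_{\textbf g}(FE_{V,U})$ by the dimension vector $\textbf e$ of the induced subobject inside $FU$ (and the complementary $\textbf f=\textbf g-\textbf e$ inside the appropriate quotient), showing that the stratification map onto $\Gr_{\textbf e}(FU)\times\Gr_{\textbf f}(FV)$ is constructible with fibres whose Euler characteristic is exactly $1$. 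This would use that $\chi$ is additive on constructible decompositions and multiplicative on algebraic fibre bundles, so that summing yields the desired identity of Euler characteristic combinations.

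For the exponent matching I would use that $\<-,-\>_a$ descends to $K_0(B)$ (Palu's result mentioned just before the theorem). Since the two exchange triangles become six-term exact sequences after applying $F$, the classes of $FE_{U,V}$ and $FE_{V,U}$ in $K_0(B)$ both equal $[FU]+[FV]$ modulo the classes contributed by $FSU$ and $FSV$, and these boundary contributions are precisely absorbed by the convention $X^T_{ST_i}=x_i$ together with the definition of the exponent $\<S_i,FM\>$. This gives
$$\<S_i,\textbf g\>_a-\<S_i,FE_{U,V}\>=\<S_i,\textbf e+\textbf f\>_a-\<S_i,FU\oplus FV\>$$
(and the analogous equality for $E_{V,U}$), so the monomials on the two sides agree term by term.

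The main obstacle is the geometric step: given subobjects $U'\subset FU$ and $V'\subset FV$, one must prove that the set of subobjects of $FE_{U,V}$ (or of $FE_{V,U}$) which restrict to $U'$ and project to $V'$ is either empty or has Euler characteristic one, and that exactly one of the two choices of ambient module gives a non-empty lift. This dichotomy is controlled by whether the induced class in $\Ext^1_B(V',FU/U')$ vanishes, and the proof requires a careful analysis of the connecting map in the long exact sequence induced by $F$ applied to the exchange triangles, together with the hypothesis $\dim \Ext^1_{\mathcal C}(U,V)=1$ to guarantee that precisely two ambient modules $E_{U,V}$ and $E_{V,U}$ arise.
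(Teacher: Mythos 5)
The paper offers no proof of this statement: it is imported verbatim from Palu's work (and, for the hereditary case, from Caldero--Keller), so your attempt has to be judged as a standalone reconstruction of that proof. Your outline does identify the correct architecture --- expand the product, regroup by total dimension vector, stratify the submodule Grassmannians of the two middle terms over $\Gr_{\textbf e}(FU)\times \Gr_{\textbf f}(FV)$, and match exponents using the fact that $\<-,-\>_a$ descends to $K_0(B)$. But the two steps you defer are precisely where the entire content of the theorem sits, so as it stands this is a plan, not a proof.

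Concretely: (1) the dichotomy you call ``the main obstacle'' --- that for each pair of submodules $U'\subset FU$, $V'\subset FV$ the lifting locus is nonempty for exactly one of $FE_{U,V}$, $FE_{V,U}$ and contributes Euler characteristic $1$ --- is the central lemma of the Caldero--Keller/Palu argument. Its proof needs the $2$-Calabi--Yau duality between $\Ext^1_{\mathcal C}(U,V)$ and $\Ext^1_{\mathcal C}(V,U)$ together with the hypothesis that these spaces are one-dimensional, and the correct statement is a constructible partition of $\Gr_{\textbf e}(FU)\times\Gr_{\textbf f}(FV)$ into two pieces over which the respective stratification maps are affine fibrations; asserting that the fibres have Euler characteristic exactly $1$ pointwise is not quite the right statement and is certainly not free. (2) Your exponent identity is asserted rather than proved, and as a term-by-term claim it is problematic: $F=\Hom_{\mathcal C}(T,-)$ applied to a triangle is exact only in the middle, so $FV\to FE_{U,V}$ need not be injective and $FE_{U,V}\to FU$ need not be surjective. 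Consequently $[FE_{U,V}]$ need not equal $[FU]+[FV]$ in $K_0(B)$, the compatible submodules of $FE_{U,V}$ need not have dimension vector $\textbf e+\textbf f$, and the discrepancy is not simply ``absorbed by the convention $X^T_{ST_i}=x_i$''. Making the monomials match is exactly where Palu uses the descent of $\<-,-\>_a$ to $K_0(B)$ in an essential and nontrivial way, and that verification occupies a substantial part of his proof. Both gaps would have to be filled before this could count as a proof.
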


	In the sequel, we will mainly apply this theorem to a particular case of pairs of objects $(U,V)$ such that $\dim \Ext^1_{\mathcal C}(U,V) \simeq k$, namely when $(U,V)$ is an exchange pair in the sense of \cite{BMRRT}. We recall here the definition of an exchange pair.
	
	Fix $U_{0}$ an indecomposable exceptional object in $\mathcal T$, according to \cite{BMRRT}, it can be completed into a cluster-tilting object $U=\overline U \oplus U_{0}$. Now it is known that there exists an unique $U_{0}^*$ non isomorphic to $U_{0}$ such that $\overline U \oplus U_{0}^*$ is a cluster-tilting object in $\mathcal C$. The pair $(U_{0},U_{0}^*)$ is called an \emph{exchange pair}.
	
	For such an exchange pair, $\Ext^1_{\mathcal C}(U_{0},U_{0}^*) \simeq k$ and $E_{U_0,U_0^*}, E_{U_0^*,U_0}$ are objects in $\add \overline U$, where $\add \overline U$ denotes the subcategory of objects whose direct summands are direct summands of $\overline U$. In particular $E_{U_0,U_0^*}$ and $E_{U_0^*,U_0}$ are exceptional objects.

	As an immediate corollary, we obtain:	
	\begin{corol}\label{corol:exchangemult}
	 	Fix $(U,U^*)$ an exchange pair in $\mathcal C$, then 
	 	$$X^T_UX^T_{U^*}=X^T_{E_{U,U^*}} + X^T_{E_{U^*,U}}$$
	\end{corol}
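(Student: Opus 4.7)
The plan is to observe that this corollary is an essentially immediate application of Theorem \ref{theorem:multiplication}, so the only work consists in verifying that an exchange pair in $\mathcal C$ falls into the hypothesis of that theorem.

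First, I would recall the definition of an exchange pair from \cite{BMRRT} as explained in the paragraph just before the corollary: if $U_0$ is an indecomposable exceptional object completed to a cluster-tilting object $\overline{U} \oplus U_0$, then there exists a unique indecomposable $U_0^*\not\simeq U_0$ such that $\overline{U} \oplus U_0^*$ is also cluster-tilting, and the pair $(U_0,U_0^*)$ is called an exchange pair. A fundamental property of such a pair, already recalled before the corollary, is that $\Ext^1_{\mathcal C}(U_0,U_0^*)$ is one-dimensional over $k$. This is precisely the hypothesis $\dim \Ext^1_{\mathcal C}(U,V)=1$ needed to apply Theorem \ref{theorem:multiplication}.

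Next, I would apply Theorem \ref{theorem:multiplication} with $V = U^*$. Since $\dim \Ext^1_{\mathcal C}(U,U^*)=1$, the objects $E_{U,U^*}$ and $E_{U^*,U}$ are well-defined as the middle terms of the two unique (up to isomorphism) non-split triangles
$$U \fl E_{U^*,U} \fl U^* \fl SU, \qquad U^* \fl E_{U,U^*} \fl U \fl SU^*,$$
and the theorem directly yields
$$X^T_U X^T_{U^*} = X^T_{E_{U,U^*}} + X^T_{E_{U^*,U}},$$
as required.

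Since the deduction is a direct specialization of the general multiplication formula of Palu, there is no genuine obstacle to overcome here: all of the substantive content is packaged into Theorem \ref{theorem:multiplication} itself. The only observation needed for the proof is the one-dimensionality of the extension space, which is part of the very definition of exchange pairs in cluster categories. In the sequel, the usefulness of the corollary comes from the additional fact (also recalled above) that $E_{U,U^*}$ and $E_{U^*,U}$ lie in $\add \overline{U}$ and are therefore exceptional, but this is not needed to establish the stated equality itself.
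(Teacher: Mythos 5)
Your proposal is correct and follows exactly the same route as the paper: the paper presents the corollary as an immediate consequence of Theorem \ref{theorem:multiplication}, using only the fact (recalled just before the statement) that $\Ext^1_{\mathcal C}(U,U^*)\simeq k$ for an exchange pair. Your additional remark that membership of $E_{U,U^*}$ and $E_{U^*,U}$ in $\add\overline U$ is not needed for the equality itself is also accurate.
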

	
	We recall the definition of the \emph{tilting graph} of a cluster category. The vertices of the tilting graph are representatives of the isomorphism classes of cluster-tilting objects of $\mathcal C$ and there is an edge between two cluster-tilting objects $M$ and $M'$ if $M$ and $M'$ differ only by one direct summand, that is if $M=U \oplus \overline U$ and $M'=U^* \oplus \overline U$ where $(U,U^*)$ is an exchange pair in $\mathcal C$. It is proved in \cite{BMRRT} (see also \cite{HU}) that the tilting graph of a cluster category is connected.
	
\end{section}

\begin{section}{Weakly positive Laurent polynomials}\label{section:weakly}
	An essential notion concerning the behaviour of the denominators of Laurent polynomials under sum and multiplication is the \emph{positivity condition}, introduced in \cite{BMRT}. Following \cite{CK2}, we will call this property the \emph{weak positivity}. 
	
	\begin{defi}
		A Laurent polynomial $F=P(\textbf x)/\textbf x^{\textbf d}$ in an irreducible form will be called \emph{weakly positive} if $P(\textbf z)>0$ for every $\textbf z \in \N^{Q_0}$ with at most one vanishing component.
	\end{defi}
	
	Given two vectors $\textbf d, \textbf e \in \Z^{Q_0}$, we write
	$$\max(\textbf d, \textbf e)=(\max(d_i,e_i))_{i \in Q_0}$$

	This notion is known to be useful in considering the denominator vectors of cluster variables, it appears to be a central point for the techniques developed in \cite{BMRT,BMR3,CK2}. The following gives the essential reasons for introducing the weak positivity.

	\begin{lem}\cite{CK2}\label{lem:faiblementpositif}
		We fix any Laurent polynomial ring $R$. For any element $L \in R$, denote by $\delta_R$ the denominator vector of $L$. Then,
		\begin{enumerate}
			\item If $L_1$ and $L_2$ are weakly positive Laurent polynomials, then $L_1+L_2$ is also weakly positive. Moreover, 
			$$\delta_R(L_1 + L_2) =\max(\delta_R(L_1), \delta_R(L_2))$$
			\item If $L_1$ and $L_2$ are Laurent polynomials such that $L_1$ is weakly positive, then $L_2$ is weakly positive if and only if $L_1L_2$ is weakly positive. Moreover
			$$\delta_R(L_1 L_2) =\delta_R(L_1)+\delta_R(L_2)$$
		\end{enumerate}
	\end{lem}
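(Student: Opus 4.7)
The plan is to argue directly from the definition of \emph{irreducible form}, without invoking any input from cluster theory: the lemma is a purely combinatorial fact about Laurent polynomials. Throughout, I would write $L_j=P_j(\textbf u)/\textbf u^{\textbf d_j}$ in irreducible form, so that by definition $P_j$ is not divisible by any $u_i$ and $P_j(\textbf z)>0$ for every $\textbf z\in\N^{Q_0}$ having at most one vanishing component.

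For part (1), I would bring $L_1$ and $L_2$ to a common denominator
$$L_1+L_2=\frac{P_1\,\textbf u^{\textbf a_1}+P_2\,\textbf u^{\textbf a_2}}{\textbf u^{\max(\textbf d_1,\textbf d_2)}}$$
with $\textbf a_j:=\max(\textbf d_1,\textbf d_2)-\textbf d_j\in\N^{Q_0}$, and verify two things about the numerator $N:=P_1\textbf u^{\textbf a_1}+P_2\textbf u^{\textbf a_2}$. Evaluating at any $\textbf z\in\N^{Q_0}$ with at most one vanishing coordinate $z_i$, I observe that at least one of $a_{1,i},a_{2,i}$ equals zero by the definition of the max, so the corresponding summand equals $P_j(\textbf z)\,\textbf z^{\textbf a_j}>0$, while the other summand is non-negative. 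This shows simultaneously that $N$ is weakly positive and that $N$ is not divisible by any $u_i$. Hence the displayed expression is in irreducible form and $\delta_R(L_1+L_2)=\max(\delta_R(L_1),\delta_R(L_2))$.

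For part (2), writing $L_1L_2=P_1P_2/\textbf u^{\textbf d_1+\textbf d_2}$, I would first note that since the polynomial ring is a domain and neither $P_1$ nor $P_2$ is divisible by $u_i$, the product $P_1P_2$ is not divisible by $u_i$ either, so this expression is already in irreducible form, yielding the denominator identity. For the equivalence, the direct implication is clear: if $P_2(\textbf z)>0$ on the relevant $\textbf z$, then $(P_1P_2)(\textbf z)=P_1(\textbf z)P_2(\textbf z)>0$. The converse uses that $P_1(\textbf z)>0$ is already known, so one may divide: $P_2(\textbf z)=(P_1P_2)(\textbf z)/P_1(\textbf z)>0$.

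There is really no main obstacle; the entire lemma reduces to elementary bookkeeping on exponents together with the fact that $\C[u_1,\ldots,u_q]$ is a domain. The only point requiring a moment of care is checking that the numerator in the sum is not divisible by any $u_i$, and this is handled by the single evaluation $\textbf z$ with $z_i=0$ and all other coordinates positive, as above.
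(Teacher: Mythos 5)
The paper does not actually prove this lemma --- it is quoted from \cite{CK2} --- so there is no in-paper argument to compare against; your self-contained proof is correct and is essentially the standard Caldero--Keller argument. The two points that need care are exactly the ones you handle: for each $i$ at least one of $a_{1,i},a_{2,i}$ vanishes, so evaluating at $\textbf z$ with $z_i=0$ and all other coordinates positive shows at once that the numerator of the sum is weakly positive and not divisible by $u_i$; and primality of $u_i$ in $\C[u_1,\ldots,u_q]$ gives that $P_1P_2$ is again in irreducible form, whence additivity of denominator vectors.
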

 	
	In \cite{CK2}, the authors proved that for any indecomposable exceptional object, $X_M$ is a weakly positive Laurent polynomial. We prove similarly that $X^T_M$ is weakly positive:

	\begin{lem}
		For any indecomposable exceptional object $M$ in $\mathcal C$ then $X^T_M$ is a weakly positive Laurent polynomial in $\textbf x$.
	\end{lem}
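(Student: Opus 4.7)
\emph{Proof plan.}
The argument will follow closely the proof of weak positivity for the classical Caldero-Chapoton map given in \cite{CK2}, adapted to Palu's formula. The case $M\simeq ST_i$ is immediate since $X^T_M=x_i$ is itself a monomial and hence weakly positive. Otherwise, I would write $X^T_M=P(\textbf x)/\textbf x^{\textbf d}$ in irreducible form, so that the coefficients of the numerator $P$ are exactly the Euler characteristics $\chi(\Gr_{\textbf e}(FM))$ appearing in Palu's formula, indexed by dimension vectors $\textbf e$ of submodules of $FM$.

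The substantive point of the proof is to establish the non-negativity of all these Euler characteristics. The hypothesis that $M$ is exceptional in $\mathcal C$ translates into rigidity of the $B$-module $FM$ (i.e. $\Ext^1_B(FM,FM)=0$), and this rigidity is exactly what is needed to ensure $\chi(\Gr_{\textbf e}(FM))\geq 0$: this is the positivity result exploited in \cite{CK2} in the hereditary case, and its extension to $\End_{\mathcal C}(T)$-modules coming from exceptional objects of $\mathcal C$ is contained in \cite{Palu}.

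Once non-negativity is secured, weak positivity comes essentially for free. By the very definition of the denominator vector $\textbf d$, for each $j\in Q_0$ one has
$$d_j \;=\; -\min\bigl\{\, \<S_j,\textbf e\>_a-\<S_j,FM\> \;:\; \chi(\Gr_{\textbf e}(FM))\neq 0 \,\bigr\},$$
and any $\textbf e_j$ attaining this minimum contributes to $P$ a monomial of zero $x_j$-degree with strictly positive coefficient (such an $\textbf e_j$ exists since $\textbf e=0$ already gives a nonzero term of coefficient $1$). Consequently, evaluating $P$ at any $\textbf z\in\N^{Q_0}$ with at most one vanishing component produces a sum of non-negative terms containing at least one strictly positive summand, so that $P(\textbf z)>0$. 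The only genuine obstacle in this plan is therefore the non-negativity of the Euler characteristics; this is precisely where the exceptionality of $M$ is crucially used, and the rest of the argument is purely formal.
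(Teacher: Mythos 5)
There is a genuine gap. Your entire argument funnels into the single claim that $\chi(\Gr_{\textbf e}(FM))\geq 0$ for every $\textbf e$, justified by saying that this positivity "is contained in \cite{Palu}". It is not: Palu's paper establishes the multiplication formula and the fact that $X^T_?$ is a cluster character, but proves nothing about the sign of the Euler characteristics of the submodule Grassmannians. Nor is it available from \cite{CK2}: even in the hereditary case ($T=kQ$), the non-negativity of $\chi(\Gr_{\textbf e}(M))$ for an exceptional module $M$ over a general acyclic quiver was not a theorem at the time (it was only settled later), and here you need it for modules over the cluster-tilted algebra $B=\End_{\mathcal C}(T)$, which is harder still. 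So the step you yourself identify as "the only genuine obstacle" is exactly the step that is not proved and cannot be outsourced to the cited references. (Granting that input, the rest of your argument is fine: with non-negative coefficients there is no cancellation, the denominator exponents are the negatives of the minimal exponents over $\textbf e$ with $\chi\neq 0$, and evaluating at a point with at most one vanishing coordinate gives a sum of non-negative terms with at least one strictly positive term.)

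The paper avoids this issue entirely and argues by induction along the tilting graph: starting from $ST=\bigoplus_i ST_i$, where $X^T_{ST_i}=x_i$ is trivially weakly positive, it uses the exchange multiplication formula $X^T_UX^T_{U^*}=X^T_{E_{U,U^*}}+X^T_{E_{U^*,U}}$ together with the closure of weak positivity under sums and the "two out of three" property for products (Lemma \ref{lem:faiblementpositif}), plus the connectedness of the tilting graph, to propagate weak positivity to all indecomposable exceptional objects. That route needs no geometric input on Euler characteristics at all. If you want to salvage your approach, you would have to either prove the non-negativity of $\chi(\Gr_{\textbf e}(FM))$ for rigid $M$ directly (a substantial theorem in its own right) or replace that step by the inductive argument — at which point you have reproduced the paper's proof.
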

	\begin{proof}
		As $M$ is an indecomposable exceptional object, it can be completed into a cluster-tilting object in $\mathcal C$. The tilting graph being connected, it suffices to prove by induction that if the $X^T_U$ are weakly positive for all the direct summands $U$ of a cluster-tilting object $R$, then the $X^T_U$ are weakly positive for all the direct summands $U$ of any cluster-tilting object $R'$ joined to $R$ by an edge in the tilting graph. 
		
		We start the induction from the cluster-tilting object $ST=\bigoplus_{i=1}^q ST_i$. For every $i \in Q_0$, $X^T_{ST_i}=x_i=1/x_i^{-1}$ is a weakly positive Laurent polynomial. 
		
		Fix now $R$ and $R'$ two cluster-tilting objects joined by an edge in the tilting graph. The $R=U \oplus \overline U$ and $R'=U^* \oplus \overline U$ where $(U,{U^*})$ an exchange pair. We assume that $X^T_V$ is a weakly positive Laurent polynomial for all the direct summands $V$ of $R$. According to corollary \ref{corol:exchangemult}, we have
		$$X^T_UX^T_{{U^*}}=X^T_{E_{U,U^*}} + X^T_{E_{U^*,U}}.$$

		According to lemma \ref{lem:faiblementpositif}, if $X^T_{E_{U,U^*}},X^T_{E_{U^*,U}}$ and $X^T_U$ are weakly positive then so is $X^T_{{U^*}}$. As $E_{U,U^*},E_{U^*,U} \in \add(\overline U)$, $X^T_{E_{U,U^*}}$, $X^T_{E_{U^*,U}}$ and $X^T_{U}$ are weakly positive Laurent polynomials by induction and then $X^T_{U^*}$ is weakly positive.
	\end{proof}
	
	\begin{corol}\label{lem:XTMweak}
		For any object $M$ without self-extensions, $X^T_M$ is a weakly positive Laurent polynomial in $\textbf x$.
	\end{corol}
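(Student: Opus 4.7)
The plan is to reduce the statement for an arbitrary exceptional object $M$ to the indecomposable case, which was treated in the preceding lemma, by exploiting the multiplicativity $X^T_{M\oplus N}=X^T_M X^T_N$ of the cluster character together with the multiplicative behaviour of weak positivity recorded in Lemma \ref{lem:faiblementpositif}(2).

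Concretely, I would first decompose $M$ into indecomposable direct summands $M\simeq \bigoplus_{j=1}^n M_j$ in the Krull--Schmidt category $\mathcal C$. The hypothesis $\Ext^1_{\mathcal C}(M,M)=0$ transfers to each summand: since $\Ext^1_{\mathcal C}(M_j,M_j)$ is a direct summand of $\Ext^1_{\mathcal C}(M,M)$, each $M_j$ is an indecomposable exceptional object, hence lies in $\mathcal T$. The preceding lemma then yields that $X^T_{M_j}$ is weakly positive for each $j$.

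Next, iterating the multiplicativity property of $X^T_?$ recalled in Section \ref{section:character} gives
\[
X^T_M = \prod_{j=1}^n X^T_{M_j}.
\]
A straightforward induction on $n$, based on Lemma \ref{lem:faiblementpositif}(2) (which asserts that a product of two weakly positive Laurent polynomials is weakly positive), shows that this product is again weakly positive, which is exactly the claim.

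There is essentially no obstacle here; the only point that needs a brief justification is the fact that indecomposable summands of an exceptional object are themselves exceptional, which follows from the additivity of $\Ext^1_{\mathcal C}(-,-)$ in both arguments.
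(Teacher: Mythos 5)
Your proof is correct and follows essentially the same route as the paper: decompose $M$ into indecomposable summands, note $X^T_M=\prod_j X^T_{M_j}$, and conclude by the multiplicativity of weak positivity from Lemma \ref{lem:faiblementpositif}. The only difference is that you spell out why each summand is itself exceptional, a point the paper leaves implicit.
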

	\begin{proof}
		Write $M=\bigoplus_i M_i$ the decomposition into indecomposable summands, then $X^T_M=\prod_i X^T_{M_i}$ is a product of weakly positive Laurent polynomials and is thus weakly positive.
	\end{proof}	
\end{section}

\begin{section}{Proof of theorem \ref{theorem:main}}\label{section:compatibility}
	
	In this section, we prove theorem \ref{theorem:main}. As we will see, the condition of \emph{exchange compatibility} introduced in \cite{BMR3} arises naturally as the necessary condition for cluster variables to have a $T$-denominator. We first recall the definition of exchange compatibility from \cite{BMR3}:

	\begin{defi}
		Fix $(U,{U^*})$ an exchange pair. An indecomposable exceptional object $N$ in $\mathcal C$ is called \emph{compatible with the exchange pair $(U,{U^*})$} if whenever $U \not \simeq \tau N \not \simeq {U^*}$ we have
		$$\dim \Hom_{\mathcal C}(N,U)+\dim \Hom_{\mathcal C}(N,{U^*})=\max(\dim \Hom_{\mathcal C}(N,E_{U,U^*}),\dim \Hom_{\mathcal C}(N,E_{U^*,U}))$$
		
		$N$ will be called \emph{exchange compatible} if it is compatible with all the exchange pairs in $\mathcal C$.
	\end{defi}
		
	We denote by $\ens{e_i, i \in Q_0}$ the canonical basis of $\Z^{Q_0}$.
	For any object $M$ in $\mathcal C$, we denote by 
	$$\ddim \Hom_{\mathcal C}(T,M)=\sum_{i \in Q_0} \dim \Hom_{\mathcal C}(T_i,M)e_i$$
	
	\begin{theoremmain}
		Let $Q$ be an acyclic quiver, $\mathcal C$ its cluster category and $T$ a cluster-tilting object in $\mathcal C$. Assume that for any $i \in Q_0$, $T_i$ is exchange compatible, then for any indecomposable exceptional object $M$ of $\mathcal C$
		$$\delta_T(X^T_M)=\left\{\begin{array}{rl}
			-e_i & \textrm{ if } M\simeq ST_i\\
			\ddim \Hom_{\mathcal C}(T,M)& \textrm{ otherwise}
		\end{array}\right.$$
	\end{theoremmain}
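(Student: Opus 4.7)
My plan is to induct along the tilting graph of $\mathcal{C}$, which is connected, with base vertex the cluster-tilting object $ST = \bigoplus_{i=1}^q ST_i$ and with inductive hypothesis that the formula of the theorem holds for every indecomposable summand of the cluster-tilting object currently under consideration. At the base vertex the claim is immediate from the very definition of Palu's cluster character, which forces $X^T_{ST_i} = x_i$ and hence $\delta_T(X^T_{ST_i}) = -e_i$, matching the first case of the formula.

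For the inductive step, pass from a cluster-tilting object $R = \overline{U} \oplus U$ (on which the formula is known) to the mutated $R' = \overline{U} \oplus U^*$, so that only the denominator of $X^T_{U^*}$ remains to be determined. Corollary \ref{corol:exchangemult} gives the exchange identity $X^T_U X^T_{U^*} = X^T_{E_{U,U^*}} + X^T_{E_{U^*,U}}$. All four factors are weakly positive Laurent polynomials by Corollary \ref{lem:XTMweak}, so Lemma \ref{lem:faiblementpositif} yields
$$\delta_T(X^T_U) + \delta_T(X^T_{U^*}) = \max\bigl(\delta_T(X^T_{E_{U,U^*}}), \delta_T(X^T_{E_{U^*,U}})\bigr).$$
Since $E_{U,U^*}$ and $E_{U^*,U}$ lie in $\add \overline{U}$, their denominator vectors are additive sums of denominator vectors of summands of $\overline{U}$, known by induction; combined with $\delta_T(X^T_U)$, this determines $\delta_T(X^T_{U^*})$, and the game is to match the result with the claimed formula.

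The matching uses exchange compatibility of the $T_i$'s. The key remark is that in $\mathcal{C}$ one has a natural isomorphism $\tau \simeq S$, since $F = \tau^{-1} S$ is identified with the identity in the orbit category $\mathcal{C} = D^b(kQ)/F$; thus the exclusion clause $U \not\simeq \tau T_i \not\simeq U^*$ of exchange compatibility becomes simply $U \not\simeq ST_i \not\simeq U^*$. For each $i$ satisfying this, compatibility of $T_i$ with the pair $(U, U^*)$ yields
$$\dim \Hom_{\mathcal{C}}(T_i, U) + \dim \Hom_{\mathcal{C}}(T_i, U^*) = \max\bigl(\dim \Hom_{\mathcal{C}}(T_i, E_{U,U^*}), \dim \Hom_{\mathcal{C}}(T_i, E_{U^*,U})\bigr),$$
which is the componentwise translation of the denominator identity and delivers the $i$-th coordinate of $\ddim \Hom_{\mathcal{C}}(T, U^*)$. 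The few coordinates with $U \simeq ST_i$ or $U^* \simeq ST_i$ are handled by inspecting the exchange triangle directly, as one must already do to perform the very first mutations out of $ST$.

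The main obstacle I expect is that $\overline{U}$, and therefore the middle terms $E_{U,U^*}$ and $E_{U^*,U}$, may themselves contain $ST_j$ summands, for which the induction gives the exceptional denominator $-e_j$ rather than the "uniform" value $\ddim \Hom_{\mathcal{C}}(T, ST_j) = 0$. These shifts propagate into the right-hand side of the denominator identity and must be cancelled carefully to recover the formula. The saving fact is that the two exchange triangles have disjoint summand supports, so a given $ST_j$ can perturb only one of $\delta_T(X^T_{E_{U,U^*}})$ or $\delta_T(X^T_{E_{U^*,U}})$; a coordinate-by-coordinate verification, using exchange compatibility of $T_j$ on the unperturbed side and direct analysis of the triangle on the perturbed side, is where the technical bookkeeping of the argument lies.
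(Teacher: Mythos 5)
Your proposal matches the paper's proof in all essentials: induction along the connected tilting graph starting from $ST$, the exchange identity from Corollary \ref{corol:exchangemult} converted into a denominator identity via weak positivity, exchange compatibility of $T_i$ (with $\tau\simeq S$) for the generic coordinates, and the fact that $E_{U,U^*}$ and $E_{U^*,U}$ share no indecomposable summand to control the coordinates where $ST_j$ appears in $\overline U$. The paper merely packages the bookkeeping you defer into the single quantity $h_i(M)=\dim\Hom_{\mathcal C}(T_i,M)-m_i(M)$, so this is essentially the same argument.
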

	
	\begin{proof}
		We fix $i \in Q_0$. Proving the theorem is equivalent to proving that for any indecomposable exceptional object $M$, we have 
		$$\delta_T(X^T_M)_i=\dim \Hom_{\mathcal C}(T_i,M)-m_i(M)=:h_i(M)$$
		where $m_i$ is the multiplicity of $ST_i$ as a direct summand of $M$ and $\delta_T(X^T_M)_i$ denotes the $i$-th component of $\delta_T(X^T_M)$.
	
		Fix $M$ an indecomposable exceptional object, it can be completed into a cluster-tilting object in $\mathcal C$. The tilting graph being connected, it suffices to prove by induction that if the property holds for all the direct summands of a cluster-tilting object $R$, then it holds for all the direct summands of any cluster-tilting object $R'$ joined to $R$ by an edge in the tilting graph. We start the induction with the cluster-tilting object $\bigoplus_{i=1}^q ST_i$ for which the result clearly holds. 
	 	
	 	Fix now $R$ and $R'$ two cluster-tilting objects joined by an edge in the tilting graph. We can write $R=U \oplus \overline U$ and $R'=U^* \oplus \overline U$ with $(U,{U^*})$ an exchange pair. We write $B=E_{U^*,U}$ and $B'=E_{U,U^*}$. Corollary \ref{corol:exchangemult} implies that
	 	$$X^T_UX^T_{{U^*}}=X^T_B + X^T_{B'}$$
	 	and by lemmas \ref{lem:XTMweak} and \ref{lem:faiblementpositif}, we thus have 
	 	$$\delta_T(X^T_U)+\delta_T(X^T_{U^*})=\max(\delta_T(X^T_B), \delta_T(X^T_{B'}))$$
	 	
	 	If $U \not \simeq ST_i \not \simeq U^*$, as $T_i$ is compatible with the exchange pair $(U,U^*)$, we have
	 	$$
	 	\dim \Hom_{\mathcal C}(T_i,U)+\dim \Hom_{\mathcal C}(T_i,U^*)=\max(\dim \Hom_{\mathcal C}(T_i,B),\dim \Hom_{\mathcal C}(T_i,B')) 
	 	$$
	 	As $U$ and $U^*$ are indecomposable, we have $m_i(U)=m_i(U^*)=0$ and thus 
	 	$$h_i(M)=\dim \Hom_{\mathcal C}(T_i,M)$$
	 	Then, the above equality becomes
	 	$$h_i(U)+h_i(U^*)=\max(\dim \Hom_{\mathcal C}(T_i,B),\dim \Hom_{\mathcal C}(T_i,B'))$$
	 	By induction, $\delta_T(X^T_U)=h_i(U)$, $\delta_T(X^T_B)=h_i(B)$ and $\delta_T(X^T_{B'})=h_i(B')$, thus 
	 	$$\delta_T(X^T_{U^*})+h_i(U)=\max(h_i(B), h_i(B'))$$
	 	so we only have to prove that 
	 	\begin{equation}\label{equation:exchange2}
	 		\max(\dim \Hom_{\mathcal C}(T_i,B),\dim \Hom_{\mathcal C}(T_i,B'))=\max(h_i(B),h_i(B')) 
	 	\end{equation}
	 	Assume that $ST_i$ is not a direct summand of $\overline U$, it follows that $m_i(B)=m_i(B')=0$ and the result holds. Assume now that $ST_i$ is a direct summand of $\overline U$, it follows that $\Ext^1_{\mathcal C}(ST_i,B)=0$ and thus $\Hom_{\mathcal C}(T_i,B)=0$ and similarly $\Hom_{\mathcal C}(T_i,B')=0$. Now, equation (\ref{equation:exchange2}) holds if and only if $m_i(B)=0$ or $m_i(B')=0$. But it is known (see \cite{BMR2}) that $B \oplus B'$ is a basic object, thus $B$ and $B'$ have no common direct summands and thus either $m_i(B)=0$ or $m_i(B')=0$, which gives the induction step.

		Now, if $U \simeq ST_i$, then $\delta_T(X^T_{U})_i=-1$. As $U \simeq ST_i$ and $B,B' \in \add \overline U$, we have $\Ext^1_{\mathcal C}(B,ST_i)=0=\Ext^1_{\mathcal C}(B',ST_i)$ and $\Ext^1_{\mathcal C}(ST_i, U^*)\simeq \Hom_{\mathcal C}(T_i,U^*) \simeq k$. Also, $m_i(B)=0=m_i(B')$ and thus $h_i(B)=h_i(B')=0$. 
		Now 
		\begin{align*}
		 	\delta_T(X^T_{U^*})_i 
		 		&=\max(\delta_T(X^T_B)_i,\delta_T(X^T_{B'})_i)-\delta_T(X^T_{U})_i\\
		 		&=1\\
		 		&=\dim \Hom_{\mathcal C}(T_i,U^*)
		\end{align*}
		As $U^*$ is indecomposable and non-isomorphic to $U$, it follows that $m_i(U^*)=0$ and thus 
		$$\delta_T(X^T_{U^*})_i =h_i(U^*).$$
		
		Now if $U^* \simeq ST_i$, then clearly $\delta_T(X^T_{U^*})_i=\delta_T(x_i)=-1=h_i(U^*)$ and the theorem is proved.
	\end{proof}

		We now claim that theorems \ref{theorem:main} and \ref{theorem:mainBMR3} are equivalent. Indeed, fix $x$ a cluster variable in $\mathcal A(Q_T)$, then as the following diagram commutes
		$$\xymatrix{ 
			\mathcal T \ar[rd]_{X^T_?} & \ar[l]^{\alpha} \Cl(Q)\\
				& \Cl(Q_T)\ar[u]^{\Phi_T}\ar[r]^{\delta_T} & \Z^{Q_0}
		}$$
		we have $\alpha(\Phi_T(x))=M$ for some indecomposable exceptional object $M \in \mathcal T$ such that $X^T_M=x$. Now if $x \neq x_i$ for all $i \in Q_0$, it follows that $x$ has a $T$-denominator if and only if
		\begin{align*}
			\ddim \Hom_{\mathcal C}(T,M) 
				&= \delta_T(x)\\
				&= \delta_T(X^T_M)
		\end{align*}
		and thus the theorems are equivalent. And if $x=x_i$, then $M=ST_i$ and $x$ has a $T$-denominator.
	
\end{section}

\begin{section}{Proof of theorem \ref{theorem:main2}}\label{section:main2}
	We now prove theorem \ref{theorem:main2}. In order to give an alternative proof of this theorem, we will nevertheless have to rely on some part of the work presented in \cite{BMR3}. More precisely, we will use the proposition 2.4 of \cite{BMR3}:
	
	\begin{prop}[\cite{BMR3}]\label{propBMR3}
		If $N$ is an indecomposable exceptional object such that $\End_{\mathcal C}(N) \not \simeq k$, then there is an indecomposable exceptional object $N^*$ such that $(N,N^*)$ is an exchange pair and $N$ is not compatible with this exchange pair.
	\end{prop}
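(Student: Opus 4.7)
The plan is to prove Theorem \ref{theorem:main2} by contradiction, using Proposition \ref{propBMR3} together with Palu's multiplication formula (Corollary \ref{corol:exchangemult}) and the weak-positivity machinery of Section \ref{section:weakly}. Assume that every cluster variable of $\mathcal A(Q)$ has a $T$-denominator; equivalently, via the algebra isomorphism $\Phi_T$, every cluster variable of $\mathcal A(Q_T)$ has a $T$-denominator in the sense of Definition \ref{defi:Tdenom}. The goal is to deduce that $\End_{\mathcal C}(T_i) \simeq k$ for each $i \in Q_0$.

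Suppose for contradiction that $\End_{\mathcal C}(T_{i_0}) \not\simeq k$ for some $i_0$. Proposition \ref{propBMR3} then produces an indecomposable exceptional $N^*$ such that $(T_{i_0}, N^*)$ is an exchange pair with respect to which $T_{i_0}$ is not compatible, i.e.\ the equation
$$\dim \Hom_{\mathcal C}(T_{i_0}, T_{i_0}) + \dim \Hom_{\mathcal C}(T_{i_0}, N^*) = \max(\dim \Hom_{\mathcal C}(T_{i_0}, E_{T_{i_0}, N^*}),\ \dim \Hom_{\mathcal C}(T_{i_0}, E_{N^*, T_{i_0}}))$$
fails. Applying Corollary \ref{corol:exchangemult} to this exchange pair gives
$$X^T_{T_{i_0}} \cdot X^T_{N^*} = X^T_{E_{T_{i_0}, N^*}} + X^T_{E_{N^*, T_{i_0}}}$$
in $\mathcal A(Q_T)$, and all four characters are weakly positive Laurent polynomials in $\textbf x$ by Corollary \ref{lem:XTMweak}. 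Lemma \ref{lem:faiblementpositif} then yields the denominator identity
$$\delta_T(X^T_{T_{i_0}}) + \delta_T(X^T_{N^*}) = \max(\delta_T(X^T_{E_{T_{i_0}, N^*}}),\ \delta_T(X^T_{E_{N^*, T_{i_0}}})).$$
Reading off the $i_0$-th coordinate, the $T$-denominator hypothesis applied to each cluster variable appearing, combined with multiplicativity of denominators from Lemma \ref{lem:faiblementpositif}(ii) applied to the decompositions of $E_{T_{i_0}, N^*}, E_{N^*, T_{i_0}} \in \add(T/T_{i_0})$ into indecomposable summands $T_j$ with $j \neq i_0$, rewrites the two sides as precisely the two sides of the compatibility equation above. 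This contradicts the failure of compatibility, so no such $i_0$ can exist, and $\End_{\mathcal C}(T_i) \simeq k$ for every $i \in Q_0$.

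The main technical obstacle I anticipate lies in handling the edge cases of Definition \ref{defi:Tdenom}: if some of the objects $N^*$ or the indecomposable summands of $E_{T_{i_0}, N^*}, E_{N^*, T_{i_0}}$ happen to be isomorphic to some $ST_k$, then the corresponding cluster variable is the initial $x_k$, whose denominator vector is $-e_k$, rather than $\ddim \Hom_{\mathcal C}(T, \cdot)$. I would resolve this uniformly by rephrasing the argument in terms of the quantity $h_{i_0}(M) = \dim \Hom_{\mathcal C}(T_{i_0}, M) - m_{i_0}(M)$, where $m_{i_0}(M)$ is the multiplicity of $ST_{i_0}$ as a direct summand of $M$ (as was done in the proof of Theorem \ref{theorem:main}), and by using $\Ext^1_{\mathcal C}(T,T) = 0$ to exclude $T_j \simeq ST_{i_0}$ among the summands of $T/T_{i_0}$. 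Once this bookkeeping is in place, the contradiction with Proposition \ref{propBMR3} is immediate.
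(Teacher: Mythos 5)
Your proposal does not prove the statement it was asked to prove. Proposition \ref{propBMR3} asserts that for any indecomposable exceptional $N$ with $\End_{\mathcal C}(N)\not\simeq k$ there \emph{exists} an exchange pair $(N,N^*)$ with which $N$ fails to be compatible. What you have written is instead an argument for Theorem \ref{theorem:main2} that takes Proposition \ref{propBMR3} as an input: your very first step is ``Proposition \ref{propBMR3} then produces an indecomposable exceptional $N^*$\dots''. As a proof of the proposition itself this is circular; as a proof of Theorem \ref{theorem:main2} it is essentially the argument already given in Section \ref{section:main2} of the paper, but that is a different statement. (For what it is worth, the paper also does not prove Proposition \ref{propBMR3}: it imports it verbatim as Proposition 2.4 of \cite{BMR3}.)

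The missing content is entirely representation-theoretic and none of the machinery you invoke (Corollary \ref{corol:exchangemult}, weak positivity, Lemma \ref{lem:faiblementpositif}) addresses it. One must actually construct, from the hypothesis $\dim\End_{\mathcal C}(N)\geq 2$, a completion of $N$ to a cluster-tilting object, take the resulting exchange pair $(N,N^*)$ with its two exchange triangles
$N^* \fl E_{N,N^*} \fl N \fl SN^*$ and $N \fl E_{N^*,N} \fl N^* \fl SN$,
apply $\Hom_{\mathcal C}(N,-)$ to them, and show that the long exact sequences force
$\dim \Hom_{\mathcal C}(N,N)+\dim \Hom_{\mathcal C}(N,N^*)$ to exceed both $\dim \Hom_{\mathcal C}(N,E_{N,N^*})$ and $\dim \Hom_{\mathcal C}(N,E_{N^*,N})$ precisely because the identity of $N$ is not the only endomorphism, together with checking the side conditions $\tau N\not\simeq N$ and $\tau N\not\simeq N^*$ so that the compatibility equation is actually required to hold. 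Your proposal contains no trace of this analysis, so it cannot be repaired by bookkeeping about $ST_k$ summands; it needs a different argument altogether.
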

	
	We now give a proof for theorem \ref{theorem:main2} which is an alternative proof of point (b) of theorem 1.5 in \cite{BMR3}:
		
	\begin{theoremmaintwo}
		Let $Q$ be a finite quiver with no oriented cycles and $\mathcal C$ be the cluster category associated to $kQ$. Let $T=\bigoplus_{i=1}^q T_i$ be a cluster-tilting object in $\mathcal C$. Let
		$\mathcal A(Q)$ be the cluster algebra associated to $Q$. If every cluster variable of $\mathcal{A}$ has a $T$-denominator, then $\End_{\mathcal C}(T_i)\simeq k$ for all $i$.
	\end{theoremmaintwo}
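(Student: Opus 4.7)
The plan is to argue by contraposition: assuming that some summand $T_{i_0}$ satisfies $\End_{\mathcal C}(T_{i_0}) \not\simeq k$, I would produce a cluster variable of $\mathcal A(Q_T)$ failing to have a $T$-denominator. The entry point is proposition \ref{propBMR3}, applied to $N = T_{i_0}$, which yields an indecomposable exceptional $T_{i_0}^*$ such that $(T_{i_0}, T_{i_0}^*)$ is an exchange pair in $\mathcal C$ and $T_{i_0}$ is not compatible with this pair. Writing $B = E_{T_{i_0}, T_{i_0}^*}$ and $B' = E_{T_{i_0}^*, T_{i_0}}$, and noting that the side conditions $T_{i_0} \not\simeq \tau T_{i_0}$ and $T_{i_0}^* \not\simeq \tau T_{i_0}$ are in force, non-compatibility unpacks to the numerical inequality
$$\dim \Hom_{\mathcal C}(T_{i_0}, T_{i_0}) + \dim \Hom_{\mathcal C}(T_{i_0}, T_{i_0}^*) \neq \max(\dim \Hom_{\mathcal C}(T_{i_0}, B), \dim \Hom_{\mathcal C}(T_{i_0}, B')).$$

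The next step is to apply corollary \ref{corol:exchangemult} to this exchange pair, giving the cluster character identity $X^T_{T_{i_0}} X^T_{T_{i_0}^*} = X^T_B + X^T_{B'}$. Since $B, B' \in \add(T / T_{i_0})$ are exceptional (as are $T_{i_0}$ and $T_{i_0}^*$), corollary \ref{lem:XTMweak} makes all four cluster characters weakly positive Laurent polynomials in $\textbf x$, so lemma \ref{lem:faiblementpositif} converts this into the coordinate-wise equation of denominator vectors
$$\delta_T(X^T_{T_{i_0}}) + \delta_T(X^T_{T_{i_0}^*}) = \max(\delta_T(X^T_B), \delta_T(X^T_{B'})).$$
Now suppose for contradiction that every cluster variable of $\mathcal A(Q_T)$ has a $T$-denominator. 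One checks that none of $T_{i_0}, B, B'$ is isomorphic to some $ST_k$ (for $B, B'$ because they lie in $\add(T/T_{i_0}) \subset \add T$; for $T_{i_0}$ because a summand of the cluster-tilting object $T$ cannot be isomorphic to a shifted summand without contradicting the vanishing of $\Ext^1_{\mathcal C}$ between summands of $T$ via the Auslander-Reiten formula). The $T$-denominator hypothesis, combined with the multiplicativity statement of lemma \ref{lem:faiblementpositif}(2) applied to the cluster monomials $X^T_B$ and $X^T_{B'}$, then rewrites every term above as $\ddim \Hom_{\mathcal C}(T, ?)$, and extracting the $i_0$-th component of the resulting vector equation recovers exactly the compatibility relation at $T_{i_0}$ that was excluded in the previous paragraph.

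The main obstacle is the borderline possibility that $T_{i_0}^*$ itself coincides with a shifted summand $ST_k$, the only configuration in which $X^T_{T_{i_0}^*}$ is an initial cluster variable and the formula $\delta_T(X^T_M) = \ddim \Hom_{\mathcal C}(T, M)$ does not apply directly to it. Either this case is ruled out by a closer analysis of the exchange pair produced by proposition \ref{propBMR3}, or it must be treated separately: in that situation $\delta_T(X^T_{T_{i_0}^*}) = -e_k$, and a direct $\Hom$--$\Ext$ computation using $\Ext^1_{\mathcal C}(T_{i_0}, T_k) = 0$ shows that $\dim \Hom_{\mathcal C}(T_{i_0}, ST_k) = 0$, so that the correction term $-e_k$ is precisely what is needed for the $i_0$-th component of the denominator identity to still reproduce the compatibility relation at $T_{i_0}$, and the same contradiction arises.
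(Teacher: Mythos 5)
Your proposal follows essentially the same route as the paper's proof: contraposition via Proposition \ref{propBMR3}, the exchange multiplication formula of Corollary \ref{corol:exchangemult} combined with weak positivity (Corollary \ref{lem:XTMweak} and Lemma \ref{lem:faiblementpositif}) to obtain the denominator-vector identity, and the $T$-denominator hypothesis to contradict non-compatibility in the $i_0$-th coordinate. The borderline case $T_{i_0}^*\simeq ST_k$ that you treat separately is in fact vacuous (it would force $\Ext^1_{\mathcal C}(T_{i_0}^*,T_k)\neq 0$, contradicting that $T_{i_0}^*\oplus\overline T$ is cluster-tilting), so your argument is correct as written.
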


  	\begin{proof}
		Assume that $\End_{\mathcal C}(T_i)\not \simeq k$ for some $T_i$. It follows from proposition \ref{propBMR3} that there exists an indecomposable exceptional object $T_i^*$ such that $(T_i,T_i^*)$ is an exchange pair and $T_i$ is not compatible with respect to this exchange pair. 
		We write $\overline T$ the complement such that $T_i \oplus \overline T$ and $T_i^* \oplus \overline T$ are cluster-tilting objects, then $(T_i,T_i^*)$ is an exchange pair and we write $B=E_{T_i^*,T_i}$ and $B'=E_{T_i,T_i^*}$.
  		
  		Corollary \ref{corol:exchangemult} implies that 
  		$$X^T_{T_i}X^T_{T_i^*}=X^T_B+X^T_{B'},$$
  		and lemmas \ref{lem:XTMweak} and \ref{lem:faiblementpositif} give
  		$$\delta_T(X^T_{T_i})+\delta_T(X^T_{{T_i^*}})=\max(\delta_T(X^T_B),\delta_T(X^T_{B'}))$$
  		
  		As $\Ext^1_{\mathcal C}(T_i, ST_i) \neq 0$, $ST_i$ is not a direct summand of $T_i \oplus \overline T$. We can thus assume that $\delta_T(X^T_{M})_i=\ddim \Hom_{\mathcal C}(T_i,M)$ for any direct summand $M$ of $\overline T$ and where $\delta_T(X^T_{M})_i$ denotes the $i$-th component of the dimension vector $\delta_T(X_M^T)$. As $B$ and $B'$ are in $\add \overline T$, we have in particular that $\delta_T(X^T_B)_i=\ddim \Hom_{\mathcal C}(T_i,B)$ and $\delta_T(X^T_{B'})_i=\ddim \Hom_{\mathcal C}(T_i,B')$.
  		
  		Now, $T_i$ is not compatible with the exchange pair $(T_i,T_i^*)$. Moreover, $\tau T_i \not \simeq T_i$ (otherwise $T_i$ has a self-extension) and $\tau T_i \not \simeq T_i^*$ (because $\Ext^1_{\mathcal C}(T_i,T_i^*)\simeq k$ and $\Ext^1_{\mathcal C}(T_i, \tau T_i)\simeq \Hom_{\mathcal C}(ST_i,ST_i) \simeq \End_{\mathcal C}(T_i) \neq k$). It thus follows from the definition that 
  		$$\dim \Hom_{\mathcal C}(T_i,T_i)+\dim \Hom_{\mathcal C}(T_i,{T_i^*}) \neq \max(\dim \Hom_{\mathcal C}(T_i,B),\dim \Hom_{\mathcal C}(T_i,B'))$$
  		
  		Then, 
  		\begin{align*}
  			\delta_T(X^T_{T_i^*}) 
  				&= \max(\delta_T(X^T_B),\delta_T(X^T_{B'})) - \delta_T(X^T_{{T_i}})\\
  				&= \max(\ddim \Hom_{\mathcal C}(T_i,B),\ddim \Hom_{\mathcal C}(T_i,B')) - \ddim \Hom_{\mathcal C}(T_i,T_i)\\
  				&\neq \ddim \Hom_{\mathcal C}(T_i,T_i^*)
  		\end{align*}
  		But $T_i^*$ is indecomposable exceptional, so $X^T_{T_i^*}$ is a cluster variable in $\mathcal C$ which has no $T$-denominator.
  	\end{proof}
  	
	Note that in \cite{BMR3}, the authors proved theorems \ref{theorem:mainBMR3} and \ref{theorem:main2} for $k$ an arbitrary algebraically closed field whereas our proofs restrict to the case where $k=\C$ is the field of complex numbers.
\end{section}

\section*{Acknowledgements}
	The author would like to thank Idun Reiten for corrections and interesting remarks concerning the subject. He would also like to thank the coordinators of the Liegrits network for organizing his stay at the NTNU in Trondheim where this paper was written.


\end{document}